\let\NAT@parse\undefined
\newcommand{\xplus}{x^{k+1}_i}
\newcommand{\tr}{^{\sf T}}
\newcommand{\jb}[1]{\footnote{{\bf JB's comment: #1}}}
\renewcommand{\jb}[1]{} 
\newlength{\noteWidth}
\long\def\notes#1{\ifinner
           {\footnotesize #1}
           \else
           \marginpar{\parbox[t]{\noteWidth}{\raggedright\footnotesize #1}}
       \fi\typeout{#1}}
\begin{document}
\title{A Decentralized Multi-block ADMM for Demand-side Primary Frequency Control using Local Frequency Measurements\thanks{$^*$Author names are ordered alphabetically.}}

\author{Jonathan Brooks$^*$, William Hager$^*$, and Jiajie Zhu$^*$}

\maketitle

\begin{abstract}
We consider demand-side primary frequency control in the power grid provided by smart and flexible loads: loads change consumption to match generation and help the grid while minimizing disutility for consumers incurred by consumption changes. The dual formulation of this problem has been solved previously by Zhao \emph{et al.} in a decentralized manner for consumer disutilities that are twice continuously differentiable with respect to consumption changes. In this work, we propose a decentralized multi-block alternating-direction-method-of-multipliers (DM-ADMM) algorithm to solve this problem. In contrast to the ``dual algorithm'' of Zhao \emph{et al.}, the proposed DM-ADMM algorithm does not require the disutilities to be continuously differentiable; this allows disutility functions that model consumer behavior that may be quite common. In this work, we prove convergence of the DM-ADMM algorithm in the deterministic setting (i.e., when loads may estimate the consumption-generation mismatch from frequency measurements exactly). We test the performance of the DM-ADMM algorithm in simulations, and we compare (when applicable) with the previously proposed solution for the dual formulation. We also present numerical results for a previously proposed ADMM algorithm, whose results were not previously reported.
\end{abstract}

\section{Introduction}
\label{sec:intro}
In the power grid, generation must match consumption. If there is too large of a mismatch, the generators may be shut down to prevent damage, and blackouts may occur as a result~\cite{kirby2007ancillary}. When a contingency occurs, and there is a sudden change in balance between generation and consumption (e.g., if generation from a renewable-energy source suddenly goes off-line), the balance must be quickly restored. Traditionally, this restoration is achieved by fast-ramping, reserve generators that are either already on-line or standing by. These generators are often fossil-fuel generators, and this ramping can increase total emissions---counteracting the ``green'' effects of the renewable-energy generators~\cite{lew2013western}. With the increasing penetration of volatile renewable energy sources, additional resources are required to ensure its stable and reliable operation.

Loads offer another such resource. Instead of generators changing generation, some loads may change consumption---providing the same service to the grid but without the increased emissions from ramping generators~\cite{siano2014demand,milligan2010utilizing}. Such loads are a powerful resource to address the imbalance in the grid. For example, commercial buildings' heating, ventilation, and air-conditioning (HVAC) systems account for approximately 6\% of the total energy consumption in the United States~\cite{Energy_EIA:2015}. However, providing this service to the grid may incur some disutility for end users (e.g., reduced ventilation or cooling in a commercial building). Thus a balance must be found between providing valuable service to the grid while minimizing consumers' disutility; this can be cast as a standard optimization problem where disutility is the objective function and the consumption-generation balance is an equality constraint.

Due to the size of the grid, centralized solutions are not practical. Therefore, decentralization is paramount and has been the focus of much of the literature. 
In particular, loads may use local power-frequency measurements to infer information about the grid as a whole~\cite{schFAPER80}. Using such measurements, loads can provide primary frequency control~\cite{NERCBalancingDocument:2011,pnnl2008value,short2007stabilization,molina2011decentralized} in a decentralized manner, which is the subject of this paper.

This work is inspired by the recent work of Zhao \emph{et al.} in~\cite{zhao2013optimal}. Zhao \emph{et al.} proposed an algorithm that solves the dual formulation of the load control problem in a decentralized manner. Loads use local frequency measurements to estimate the mismatch between consumption and generation (i.e., loads estimate the amount of violation of the equality constraint). These estimates are then used for dual ascent. Loads may also communicate with other loads to average out noise in the estimates.

The algorithm proposed by Zhao \emph{et al.} requires the consumer disutilities to be twice continuously differentiable with respect to consumption changes. However, for some consumers, it may be more costly to decrease consumption than to increase consumption (or vice versa). Such a model for consumer behavior may not be twice continuously differentiable (e.g., $f_1$ in Figure~\ref{fig:f}). We hypothesize that such disutilities may be quite common; for example, some balancing authorities have separate markets for providing upward and downward regulation services~\cite{kirby2010providing}. This allows ancillary-service providers to judge the disutility of increasing or decreasing output independently. For example, separate payment rates for upward and downward service by a commercial HVAC system are used in a contractual framework proposed in~\cite{maasoumy2014model}. Additionally, consumers may be able to change consumption without incurring much disutility if the changes are small, but there may be significant disutility for larger changes. For example, it was shown in~\cite{linbarmeymid:2015} that commercial HVAC systems can be used to provide regulation without adverse effects on indoor climate so long as the changes in consumption are small and band-limited. Similarly, the authors of~\cite{todd2008providing} studied an aluminum-smelting plant's ability to provide ancillary services; the results of the study suggested that there may be little disutility if consumption changes are small, but there is significant disutility if changes are too large or last for too long. The loads described in~\cite{linbarmeymid:2015,todd2008providing} might have a disutility function similar to $f_2$ in Figure~\ref{fig:f}, which is not continuously differentiable.
 

\begin{figure}[htpb]
\includegraphics[scale=0.3]{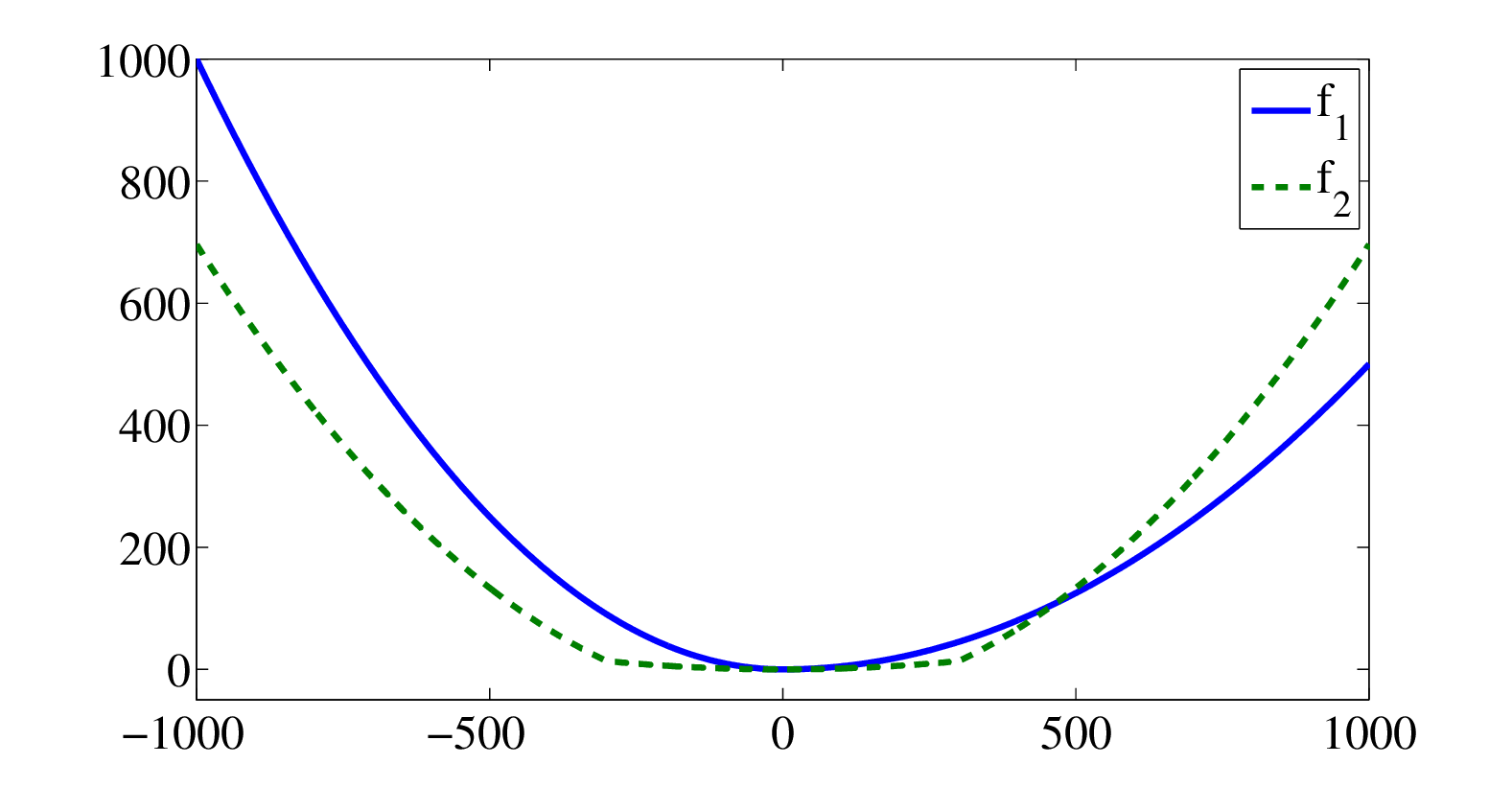}
\caption{Models of consumer disutility that are not twice continuously differentiable with respect to change in consumption. Both functions are strongly convex, but $f_1$ is not twice continuously differentiable, and $f_2$ is not continuously differentiable.}
\label{fig:f}
\end{figure}

In this work, we propose an alternative solution to the load control problem that requires fewer restrictions on the consumer disutility functions. Specifically, our proposed algorithm does not require consumer disutilities to be continuously differentiable. This is a significant extension to the alogrithm of Zhao \emph{et al.} because it allows disutility functions that model consumer behaviors such as those shown in Figure~\ref{fig:f}. Our proposed solution is a decentralized version of the alternating-direction method of multipliers (ADMM), where loads use local frequency measurements to estimate the violation of the equality constraint. The use of decentralized ADMM for smart-grid applications was discussed in~\cite{liu2015multi}. The authors of that work suggested using a ``proximal Jacobian'' ADMM (PJ-ADMM) algorithm, whose convergence was proven in~\cite{deng2013parallel}, but no numerical results were reported. 

In this preliminary work, we prove the convergence of our algorithm to a feasible-optimal solution (i.e., one that minimizes disutility while maintaining the equality constraint). Our convergence proof is limited to the deterministic setting, in which loads may estimate the consumption-generation mismatch exactly from frequency measurements. However, we test our algorithm through simulations with noisy estimates, and results indicate our algorithm performs well even in the presence of noise. We compare the performance of our proposed algorithm with that of the algorithm proposed by Zhao \emph{et al.} (when applicable), and we find that performance is comparable. Additionally, we present numerical results for the PJ-ADMM algorithm, and we find similar performance compared to the DM-ADMM algorithm as well.

This paper is organized as follows. We formally introduce the problem in Section~\ref{sec:prob}. We introduce our proposed solution in Section~\ref{sec:dm-admm}. Section~\ref{sec:converge} details our proof of convergence of the proposed algorithm. In Section~\ref{sec:sim}, we describe our simulation environment and present the results of the simulations. Finally, Section~\ref{sec:conc} concludes this work and provides possible avenues for future work.

\section{Problem Formulation}\label{sec:prob}
We consider the problem from~\cite{zhao2013optimal}, in which there is a microgrid with one generator and $n$ flexible loads with one system-wide frequency. We divide time into a discrete time axis $k=0,1,\ldots$. Denote the generation output at time $k$ by $g^k$, and denote its nominal value by $g^0$. In this paper, we consider a sudden change in generation output, which we model as a step change of size $C$. It is the goal of the flexible loads to change their consumption in order to maintain system frequency at the nominal value $\omega^0$; we denote this change in consumption by $x_i$ for load $i$. In addition, load $i$ is constrained in how much it may change its consumption; i.e., $x_i\in[a_i,~b_i]$. At the same time, load $i$ incurs a disutility $f_i(x_i)$ by changing consumption, and the loads must minimize the total disutility $\sum_if_i(x_i)$. More formally, we consider the optimization problem
\begin{equation}
\label{knapsack}
\begin{aligned}
& \underset{x_i,\ i=1,\ldots,n}{\text{minimize}}
& & \sum_{i=1}^{n}f_i (x_i) \\
& \text{subject to}
& & a_i \leq x_i \leq b_i, \; i = 1, \ldots, n,\\
& 
& & \sum_{i=1}^{n} x_i = C,
\end{aligned}
\end{equation}
where agents must make decisions in a decentralized manner.

One of the prominent appearances of the similar problems is in solving the dual formulation of support vector machine(SVM). Many optimization methods, such as \citep{Platt98,Joachims98,df03c,Zanni06,ghz11}, have been developed to solve SVM. If the objective function in problem \eqref{knapsack} is a separable quadratic function, the problem is refered to as the separable convex quadratic knapsack problem. A few algorithms, such as \citep{pardalos90} and \cite{davisefficient} have been proposed to solve it. All the aforementioned methods are efficient solvers of the optimization problem. However, all the methods mentioned are centralized algorithms. It means they rely on a centralized control unit to update and store variables.

The linear equality constraint in~\eqref{knapsack} requires information to be shared among agents. In the power grid, it is impractical for agents (loads) to share such information due to the possible number of agents as well as privacy concerns; e.g., load $i$ may not want to share information regarding its consumption or disutility with other loads. Hence, agent $i$ does not have the information of other functions $f_j$'s or variables $x_j$'s, for $j\neq i$.

However, agents have the ability to locally measure the system frequency, which can be used to estimate the consumption-generation mismatch~\cite{zhao2013optimal}. That is, the local frequency measurements allow each load to estimate the primal residual
\begin{align*}
r^k=\sum_{i=1}^n x_i^k-C,
\end{align*}
which is a measurement of feasibility. In practice, this estimate will be noisy; i.e., load $i$ will have access to
\begin{align*}
\hat{r}_i^k=r^k+e_i^k,
\end{align*}
where $e_i^k$ is the estimation error. It was shown in~\cite{zhao2013optimal} that, using the estimator described in~\cite{kitanidis1987unbiased}, $e_i^k$ is a martingale-difference sequence. This estimator is describedin Section~\ref{sec:est}.


\section{Decentralized Multi-block ADMM}\label{sec:dm-admm}
%
In the standard form, the ADMM splits the primal variable into two blocks $x$ and $z$, but it is desirable for the problem to 
be completely separable; i.e., each update may be done in a distributed fashion by each component. 
It is worth noting that~\citep{luohong2012,deng2013parallel} mentioned variants of ADMM using Jacobi update schemes. Their algorithms and convergence analysis are different from ours.

We first consider the following augmented formulation of Problem~\eqref{knapsack}:

\begin{equation}
\label{aug_knapsack}
\begin{aligned}
& \underset{x}{\text{minimize}}
& & \sum_{i=1}^{n}f_i (x_i) + \frac{\rho}{2}\|\sum_{i=1}^{n} x_i - C\|^2\\
& \text{subject to}
& & a_i \leq x_i \leq b_i, \; i = 1, \ldots, n,\\
& 
& & \sum_{i=1}^{n} x_i = C.
\end{aligned}
\end{equation}
With this formulation, we propose the Decentralized Multi-block ADMM (DM-ADMM) with the following update rule for agent $i$, which is an adaptation of the ADMM algorithm.
\bigskip

{\tt
\begin{tabular}{l}
{\sc Decentralized Multi-block ADMM}\\
\hline
Distributed task. For each block $i$,\\

\begin{minipage}{5cm}
\begin{equation}
\label{update_i}
\begin{aligned}
y_i^{k+1} & = y_i^{k} + \rho \hat{r}_i^k,\\
x_i^{k+1} & = \argmin_{a_i \leq x \leq b_i} f_i(x) + {y_i^{k+1}}(x+\hat{r}_i^k-x_i^k)\\
          &\quad\quad\quad\quad\quad\quad\quad+\frac{\rho}{2}\|x+\hat{r}_i^k-x_i^k\|^2.
\end{aligned}
\end{equation}
\end{minipage}\\
End.\\
\hline \\
\end{tabular}}




\section{Convergence of the DM-ADMM Algorithm}\label{sec:converge}
We will use the terminology by the foundational work of~\citep{Rockafellar70}. 

In this preliminary work, to simplify the convergence analysis, we consider the scenario without noise, and we leave analysis of the noised update for future work. We also assume that each agent $i$ has the same initial dual variable $y_i^0$. Finally, we make two general assumptions on the disutility functions $f_i$, which are easily satisfied because the form of $f_i$ is a modeling choice. These assumptions are summarized below.

\begin{assumption}\label{as:res}
$\hat{r}_i^k=r^k$ for all $i$ and $k$.
\end{assumption}
\begin{assumption}\label{as:y}
$y_i^0=y_j^0$, for all $i,\ j$.
\end{assumption}
\begin{assumption}\label{as:f}
The function $f_i$ is proper, lower semi-continuous, and convex for all $i$.
\end{assumption}
\begin{assumption}\label{as:strong}
The total disutility $\sum_i f_i$ is strongly convex.
\end{assumption}

In the proof below, we simplify the problem by dropping the box constraint $a_i\leq x \leq b_i$. Such a constraint can be incorporated into the proof by replacing the objective function $f_i(x)$ by $f_i(x)+g_i(x)$, where $g_i(x)$ is the indicator function of the interval $[a_i,b_i]$. The resulting objective function is still a proper, lower semi-continuous convex function. Thus the proof remains the same.

It is obvious that, in our update rule, all the $y_i^k$'s are equal if Assumptions~\ref{as:res} and~\ref{as:y} hold. We thus drop the sub-index for the dual variable in our proof.

If $r^{k+1}=0$, then the equality constraint of Problem~\eqref{knapsack} is satisfied. We use $p^\star = \sum_{i=1}^nf_i(x^\star_i)$ and $p^{k} = \sum_{i=1}^nf_i(x^{k}_i)$ to denote the global minimum value and the objective value at the $k$-th iteration.

\begin{proposition}
\label{obj_cvgs}
Let Assumptions~\ref{as:res},~\ref{as:y}, and~\ref{as:f} hold. Let $x^\star$ be the global minimizer of Problem~\eqref{knapsack} and $p^\star = \sum_{i=1}^nf_i(x^\star_i)$ be the global minimum value. Let $x^{k+1}$ and $y^{k+1}$ be the iterates generated by the update rule~\eqref{update_i} and $p^{k+1}=\sum_{i=1}^nf_i(\xplus)$ be the corresponding objective function value at that iteration.
Then
\begin{align}
\label{ppstar}
\begin{aligned}
p^{k+1} - p^\star \leq &-y^{k+2}r^{k+1} - \rho(r^k-r^{k+1})r^{k+1}\\
&\ \  - \rho\sum_{i=1}^n (x^k_i - x^{k+1}_i)(x_i^\star-\xplus)
\end{aligned}
\end{align}
\end{proposition}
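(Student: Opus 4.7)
The plan is to extract a subgradient from the first-order optimality condition of the $x_i$-subproblem, plug it into the convexity inequality $f_i(x_i^\star) \geq f_i(\xplus) + s_i^{k+1}(x_i^\star - \xplus)$ for some $s_i^{k+1} \in \partial f_i(\xplus)$, and then sum over $i$ and use $\sum_i x_i^\star = C$ together with the identity $\sum_i \xplus - C = r^{k+1}$. The final bookkeeping step then rewrites $y^{k+1}$ as $y^{k+2} - \rho r^{k+1}$ using the dual update so that the right-hand side acquires the advertised form.

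Concretely, first I would write the update~\eqref{update_i} in subgradient form. Since $\xplus$ minimizes a convex function of $x$, there exists $s_i^{k+1} \in \partial f_i(\xplus)$ with
\begin{equation*}
s_i^{k+1} + y^{k+1} + \rho(\xplus + r^k - x_i^k) = 0.
\end{equation*}
(The box constraint $a_i \le x \le b_i$ is absorbed into $f_i$ via an indicator as noted before the proposition, so Assumption~\ref{as:f} still applies.) Next, convexity of $f_i$ gives $f_i(\xplus) - f_i(x_i^\star) \leq s_i^{k+1}(\xplus - x_i^\star)$; summing over $i$ yields
\begin{equation*}
p^{k+1}-p^\star \leq \sum_{i=1}^n \bigl[-y^{k+1}-\rho(\xplus+r^k-x_i^k)\bigr]\bigl(\xplus-x_i^\star\bigr).
\end{equation*}

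Then I would simplify the right-hand side in two pieces. Since $x^\star$ is feasible, $\sum_i x_i^\star = C$, so $\sum_i(\xplus - x_i^\star) = r^{k+1}$; this collapses the $y^{k+1}$-part to $-y^{k+1}r^{k+1}$. For the $\rho$-part, I would split $\xplus + r^k - x_i^k = (\xplus - x_i^k) + r^k$, apply $\sum_i(\xplus - x_i^\star)=r^{k+1}$ to the $r^k$-term, and flip the sign on $(\xplus - x_i^k)(\xplus - x_i^\star)$ to match the form $(x_i^k - \xplus)(x_i^\star - \xplus)$. Finally, the dual update gives $y^{k+1} = y^{k+2} - \rho r^{k+1}$, and substituting converts $-y^{k+1}r^{k+1}$ into $-y^{k+2}r^{k+1} + \rho (r^{k+1})^2$; grouping this with the $-\rho r^k r^{k+1}$ term produces $-\rho(r^k - r^{k+1})r^{k+1}$, which is exactly~\eqref{ppstar}.

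The only nontrivial step is the first one: because $f_i$ need not be differentiable (this is the whole point of moving to ADMM instead of the dual algorithm of Zhao et al.), I must argue existence of a subgradient $s_i^{k+1}$ satisfying the stationarity identity above. This follows from standard subdifferential calculus on proper lower semi-continuous convex functions (Assumption~\ref{as:f}) applied to the sum of $f_i$ with the smooth quadratic and linear terms in~\eqref{update_i}, since the quadratic and linear parts are everywhere differentiable and hence their subdifferential is a singleton. After that, everything is algebraic rearrangement, and strong convexity (Assumption~\ref{as:strong}) is not yet needed at this stage, which is consistent with the hypotheses listed in the proposition.
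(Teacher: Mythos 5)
Your proposal is correct and follows essentially the same route as the paper's proof: first-order optimality of the $x_i$-update to extract a subgradient, the convexity inequality against $x_i^\star$, summation with feasibility $\sum_i x_i^\star = C$, and the dual relation $y^{k+1} = y^{k+2} - \rho r^{k+1}$. The only cosmetic difference is that the paper substitutes the dual update into the stationarity condition before invoking the minimizer inequality, whereas you substitute it at the end, which is the same argument in a different order.
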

\begin{proof}
See Appendix.
\end{proof}

\begin{lemma}
\label{lb_obj}
Let Assumptions~\ref{as:res},~\ref{as:y},~\ref{as:f}, and~\ref{as:strong} hold. Let $(x^\star, y^\star)$ be the global solution of Problem~\eqref{knapsack}, and let $p^\star$ and ${p}^{k+1}$ denote the objective function value at $x^\star$ and ${x}^{k+1}$, respectively. 
Then
\begin{equation}
\label{stronglyconvex}
p^\star - p^{k+1} < {y^\star} \big(\sum_i{x}^{k+1}_i -C\big) - \xi \|{x}^{k+1} - x^\star\|^2,
\end{equation}
where $\xi$ is a constant.
\end{lemma}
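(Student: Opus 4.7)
The plan is to derive the inequality by combining the strong convexity of $F(x) := \sum_i f_i(x_i)$ with a first-order optimality (KKT) condition for Problem~\eqref{knapsack} at $(x^\star, y^\star)$. Since $p^\star$ is the global minimum we expect $p^\star - p^{k+1} \leq 0$, so the target inequality is really an \emph{upper} bound that quantifies how much the primal residual $\sum_i x_i^{k+1} - C$ and the distance $\|x^{k+1}-x^\star\|$ constrain the current objective gap.

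First I would apply Assumption~\ref{as:strong}: because $F$ is strongly convex with some modulus $2\xi > 0$, for \emph{any} subgradient $g^\star \in \partial F(x^\star)$ the standard strong-convexity lower bound yields
\begin{equation*}
F(x^{k+1}) \geq F(x^\star) + \langle g^\star,\, x^{k+1} - x^\star \rangle + \xi\, \|x^{k+1} - x^\star\|^2,
\end{equation*}
or equivalently $p^\star - p^{k+1} \leq -\langle g^\star,\, x^{k+1} - x^\star \rangle - \xi\, \|x^{k+1} - x^\star\|^2$.

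Second, I would pin down a specific $g^\star$ using the KKT conditions. After dropping the box constraints as noted in the text, Problem~\eqref{knapsack} reduces to minimizing $F(x)$ subject to the single linear equality $\sum_i x_i = C$. The Lagrangian is $L(x,y) = F(x) + y\big(\sum_i x_i - C\big)$; because the constraint is linear and the problem is feasible, standard convex-duality results in the style of~\citep{Rockafellar70} guarantee that any primal-dual optimum $(x^\star, y^\star)$ satisfies $0 \in \partial f_i(x_i^\star) + y^\star$ for every $i$, i.e.\ $-y^\star \in \partial f_i(x_i^\star)$. Stacking these componentwise gives $g^\star = (-y^\star, \ldots, -y^\star) \in \partial F(x^\star)$. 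Substituting, and using primal feasibility $\sum_i x_i^\star = C$, I get
\begin{equation*}
\langle g^\star,\, x^{k+1} - x^\star\rangle = -y^\star \sum_i (x_i^{k+1} - x_i^\star) = -y^\star\Big(\textstyle\sum_i x_i^{k+1} - C\Big),
\end{equation*}
which plugs directly into the strong-convexity bound to produce the claimed inequality. The strict ``$<$'' can be recovered either by restricting to the non-degenerate case $x^{k+1} \neq x^\star$ (the statement is vacuous otherwise) or by noting that the dual slack is strictly positive away from the solution.

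The main obstacle will be the non-smooth KKT step, since Assumption~\ref{as:f} only provides proper lower semi-continuous convexity of each $f_i$, so classical gradient-based KKT reasoning is unavailable. I would handle this by invoking the Fenchel--Rockafellar subdifferential calculus: the equality constraint is polyhedral, so the constraint qualification needed to write $\partial\big(F + \iota_{\{\sum_i x_i = C\}}\big)(x^\star) = \partial F(x^\star) + \mathbb{R}\cdot (1,\ldots,1)$ holds automatically, and this decomposition is exactly what produces the multiplier $y^\star$ with $-y^\star \in \partial f_i(x_i^\star)$. Once this subgradient selection is in hand, the rest of the argument is a short algebraic computation.
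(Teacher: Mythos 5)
Your proposal is correct and is essentially the paper's own argument: the paper observes that $x^\star$ minimizes the strongly convex function $L_0(x,y^\star)=\sum_i f_i(x_i)+y^\star\bigl(\sum_i x_i - C\bigr)$ and applies the resulting quadratic-growth bound, which is exactly your KKT/subgradient step $-y^\star\in\partial f_i(x_i^\star)$ combined with the strong-convexity inequality for $\sum_i f_i$, followed by the same substitution using $\sum_i x_i^\star = C$ (your explicit check of the polyhedral constraint qualification just makes the paper's implicit saddle-point assumption precise). Note that, like the paper's own proof, your argument yields ``$\leq$'' rather than the strict ``$<$'' appearing in the statement; this looseness is in the paper's statement itself, and only the non-strict inequality is used later.
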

\begin{proof}
See Appendix.
\end{proof}

\begin{proposition}
\label{primal_res}
Let Assumptions~\ref{as:res},~\ref{as:y},~\ref{as:f}, and~\ref{as:strong} hold. Let $\left( x^{k}, y^k \right)$ be the pair generated by update rule~\eqref{update_i} and $(x^\star, y^\star)$ be the global solution of Problem~\eqref{knapsack}. 
If $\displaystyle\rho\leq \frac{\xi}{2\left(n-1\right)}$, where $\xi$ is the constant from Lemma~\ref{lb_obj}, then
\begin{align}
\label{rk_ineq}
\begin{aligned}
&\left(\frac{1}{\rho} \|y^{k+1}-y^\star\|^2 + (\rho+\xi)\| x^{k} - x^\star\|^2 \right)\\
&- \left(\frac{1}{\rho}\|y^{k+2}-y^\star\|^2 +(\rho+\xi) \|x^{k+1} - x^{\star}\|^2\right)\geq
\rho (r^k)^2,
\end{aligned}
\end{align}
where $\rho$ is the constant used in~\eqref{update_i}.
\end{proposition}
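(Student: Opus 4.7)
The plan is to view the left-hand side of \eqref{rk_ineq} as a one-step decrement $V^k-V^{k+1}$ of the Lyapunov-type quantity
\[
V^k \;:=\; \tfrac{1}{\rho}\|y^{k+1}-y^\star\|^2 \;+\; (\rho+\xi)\|x^k-x^\star\|^2,
\]
and to prove $V^k-V^{k+1}\ge\rho(r^k)^2$ by combining Proposition~\ref{obj_cvgs} with Lemma~\ref{lb_obj}. First I would add the upper bound on $p^{k+1}-p^\star$ from Proposition~\ref{obj_cvgs} to the strict lower bound on the same quantity from Lemma~\ref{lb_obj}; the objective values cancel and, after rearranging and scaling by $-2$, one obtains
\[
-2(y^{k+2}-y^\star)r^{k+1} > 2\xi\|x^{k+1}-x^\star\|^2 + 2\rho(r^k-r^{k+1})r^{k+1} + 2\rho\sum_i(x_i^k-x_i^{k+1})(x_i^\star-x_i^{k+1}).
\]
This is the inequality that will drive the dual part of the Lyapunov decrement.

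Next I would expand $V^k-V^{k+1}$ directly. Using the update $y^{k+2}=y^{k+1}+\rho r^{k+1}$ gives $\tfrac{1}{\rho}(\|y^{k+1}-y^\star\|^2-\|y^{k+2}-y^\star\|^2) = -2r^{k+1}(y^{k+2}-y^\star) + \rho(r^{k+1})^2$, and the standard three-point identity gives
\[
\|x^k-x^\star\|^2-\|x^{k+1}-x^\star\|^2 = \|x^k-x^{k+1}\|^2 - 2\sum_i(x_i^k-x_i^{k+1})(x_i^\star-x_i^{k+1}).
\]
Substituting the inequality from the previous step, the $2\rho$ cross terms from the primal expansion cancel exactly with those the substitution produces, leaving only a residual $-2\xi\sum_i(x_i^k-x_i^{k+1})(x_i^\star-x_i^{k+1})$. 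The pure-$r$ piece $2\rho(r^k-r^{k+1})r^{k+1}+\rho(r^{k+1})^2$ then rewrites via the algebraic identity $2ab-b^2=a^2-(a-b)^2$ as $\rho(r^k)^2-\rho(r^k-r^{k+1})^2$, producing the target $\rho(r^k)^2$ together with a secondary residual $-\rho(r^k-r^{k+1})^2$.

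It remains to absorb the two non-positive residuals into the positive terms $(\rho+\xi)\|x^k-x^{k+1}\|^2$ and $2\xi\|x^{k+1}-x^\star\|^2$ that have also appeared. Since $r^k-r^{k+1}=\sum_i(x_i^k-x_i^{k+1})$, Cauchy--Schwarz yields $(r^k-r^{k+1})^2\le n\|x^k-x^{k+1}\|^2$, so the coefficient on $\|x^k-x^{k+1}\|^2$ becomes $\xi-(n-1)\rho$; this is the unique point at which the hypothesis $\rho\le\xi/(2(n-1))$ is invoked, and it renders the coefficient at least $\xi/2$. The cross term $-2\xi\sum_i(x_i^k-x_i^{k+1})(x_i^\star-x_i^{k+1})$ is then expanded by the same three-point identity, contributing $\xi\|x^k-x^\star\|^2+\xi\|x^{k+1}-x^\star\|^2-\xi\|x^k-x^{k+1}\|^2$; the remaining $-\tfrac{\xi}{2}\|x^k-x^{k+1}\|^2$ is finally absorbed using $\|x^k-x^{k+1}\|^2\le 2\|x^k-x^\star\|^2+2\|x^{k+1}-x^\star\|^2$, leaving exactly $\rho(r^k)^2$. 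The main obstacle is bookkeeping rather than conceptual: three sources of cross terms (dual telescoping, primal telescoping, and the Proposition/Lemma combination) must line up so that the $(x_i^k-x_i^{k+1})(x_i^\star-x_i^{k+1})$ products consolidate into a single $-2\xi\sum_i$ residual, and the sharp threshold $\rho\le\xi/(2(n-1))$ is dictated precisely by the Cauchy--Schwarz factor of $n$ relating $(r^k-r^{k+1})^2$ to $\|x^k-x^{k+1}\|^2$.
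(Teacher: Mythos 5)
Your proposal is correct and follows essentially the same route as the paper's proof: it combines Proposition~\ref{obj_cvgs} with Lemma~\ref{lb_obj}, expands the dual difference via $y^{k+2}=y^{k+1}+\rho r^{k+1}$, handles the cross terms with three-point identities, and closes with $\big(\sum_i a_i\big)^2\le n\sum_i a_i^2$ plus $\|x^{k+1}-x^k\|^2\le 2\|x^k-x^\star\|^2+2\|x^{k+1}-x^\star\|^2$, which is exactly where the threshold $\rho\le\xi/(2(n-1))$ enters in the paper as well. One bookkeeping slip: the expansion of $-2\xi\sum_i(x_i^k-x_i^{k+1})(x_i^\star-x_i^{k+1})$ should read $\xi\|x^k-x^\star\|^2-\xi\|x^{k+1}-x^\star\|^2-\xi\|x^k-x^{k+1}\|^2$ (minus, not plus, on the middle term); with this correction the remaining terms still sum to a nonnegative quantity, so the conclusion is unaffected.
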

\begin{proof}
See Appendix.
\end{proof}

\begin{theorem}\label{the:main}
Let Assumptions~\ref{as:res},~\ref{as:y},~\ref{as:f}, and~\ref{as:strong} hold. Then the primal residual $r^k\to 0$ as $k\to\infty$. Furthermore, if there exists $\sigma >0$ such that $\displaystyle\rho\leq \frac{\xi-\sigma}{2\left(n-1\right)}$, then the primal variable converges $x^k\to x^\star$ and the objective function value converges $p^k\to p^\star$.
\end{theorem}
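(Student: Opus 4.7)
The plan is to treat Proposition~\ref{primal_res} as a Lyapunov descent inequality and combine it with Lemma~\ref{lb_obj} and Proposition~\ref{obj_cvgs} to obtain all three conclusions. Introduce
\begin{equation*}
V^k := \frac{1}{\rho}\|y^{k+1}-y^\star\|^2 + (\rho+\xi)\|x^k - x^\star\|^2.
\end{equation*}
Proposition~\ref{primal_res} reads $V^k - V^{k+1} \geq \rho(r^k)^2 \geq 0$, so $\{V^k\}$ is non-increasing and bounded below by zero, hence convergent. Telescoping yields $\sum_{k=0}^{\infty} \rho(r^k)^2 \leq V^0 < \infty$, which forces $r^k \to 0$ and establishes the first claim. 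As a by-product, $\{x^k\}$ and $\{y^{k+1}\}$ are bounded.

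For the second claim, I would exploit the strict bound $\rho \leq \frac{\xi - \sigma}{2(n-1)}$ to sharpen Proposition~\ref{primal_res} into a descent inequality of the form
\begin{equation*}
V^k - V^{k+1} \geq \rho(r^k)^2 + \sigma\|x^{k+1}-x^\star\|^2.
\end{equation*}
The intuition is that the threshold $\xi \geq 2(n-1)\rho$ used in the proof of Proposition~\ref{primal_res} arises from absorbing cross-terms of the form $\rho\sum_i(x_i^k-x_i^{k+1})(x_i^\star-x_i^{k+1})$ against the $-\xi\|x^{k+1}-x^\star\|^2$ term supplied by Lemma~\ref{lb_obj} via $2|ab|\leq a^2+b^2$; when $\xi - 2(n-1)\rho \geq \sigma$, a residual $\sigma\|x^{k+1}-x^\star\|^2$ survives on the favorable side. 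Summing the sharpened inequality and bounding by $V^0$ gives $\sum_k \|x^{k+1}-x^\star\|^2 < \infty$, so $x^k \to x^\star$.

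Convergence of the objective values then follows by sandwiching: the upper bound $\limsup_k p^k \leq p^\star$ comes from Proposition~\ref{obj_cvgs}, whose right-hand side is a sum of products each containing a factor that tends to zero ($r^k$, $r^{k+1}$, or $x_i^k - x_i^{k+1}$), with the remaining factors bounded since $\{y^{k+1}\}$, $\{x^k\}$, and $x^\star$ are bounded. The matching lower bound $\liminf_k p^k \geq p^\star$ comes directly from Lemma~\ref{lb_obj} once $r^{k+1}\to 0$ and $\|x^{k+1}-x^\star\| \to 0$ are in hand. The main obstacle is the sharpening step in the second paragraph: I would need to reopen the Appendix proof of Proposition~\ref{primal_res} to verify that the slack $\sigma$ in the bound on $\rho$ really yields an additive $\sigma\|x^{k+1}-x^\star\|^2$ term on the right-hand side (rather than, say, $\sigma\|x^k-x^{k+1}\|^2$, which would still suffice for $x^k \to x^\star$ via Opial-style arguments but requires an extra step to identify the limit).
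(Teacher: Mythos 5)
Your proposal is correct and follows essentially the same route as the paper: it too telescopes Proposition~\ref{primal_res} to get $r^k\to 0$, re-derives the Appendix inequality with $\frac{\xi-\sigma}{2}$ in place of $\frac{\xi}{2}$ so that a residual $\sigma\|x^{k}-x^\star\|^2+\sigma\|x^{k+1}-x^\star\|^2$ survives on the favorable side (confirming the sharpening you anticipated, rather than your fallback $\sigma\|x^k-x^{k+1}\|^2$), and then sandwiches $p^k$ between Proposition~\ref{obj_cvgs} and Lemma~\ref{lb_obj}. Your explicit note that boundedness of $\{x^k\}$ and $\{y^{k+1}\}$ (from the monotone Lyapunov sequence) is needed to drive the right-hand side of~\eqref{ppstar} to zero is a detail the paper leaves implicit.
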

\begin{proof}
Summing the inequality~\eqref{rk_ineq} from $k=1$ to $k=K$ and letting $K\to\infty$, we have
\begin{equation}
\label{rk_zero}
\begin{aligned}
\frac{1}{\rho} \|y^{1}-y^\star\|^2 + (\rho+\xi)\| x^{0}-x^\star\|^2\geq
\sum_{k=1}^{\infty}\rho {r^k}^2.
\end{aligned}
\end{equation}
Therefore, $r^k\to 0$ as $k\to\infty$. 

If $\displaystyle\rho\leq \frac{\xi-\sigma}{2\left(n-1\right)}$, inequality~\eqref{jenzen} (see Appendix) can be re-written as
\begin{equation}
\label{jenzen2}
\begin{aligned}
&\left(\frac{1}{\rho} \|y^{k+1}-y^\star\|^2 + (\rho+\xi)\| x^{k} - x^\star\|^2 \right)\\
&- \left(\frac{1}{\rho}\|y^{k+2}-y^\star\|^2 +(\rho+\xi) \|x^{k+1} - x^{\star}\|^2\right)\\
&\geq
\rho {r^k}^2 + (\rho+\frac{\xi-\sigma}{2})n\sum_{i=1}^n \frac{1}{n} \left(\xplus - x^k_i\right)^2\\
&\quad- \rho n^2\left(\sum_{i=1}^n \frac{1}{n}\left(\xplus - x^k_i\right)\right)^2\\
&\quad+ \sigma \|{x}^{k} - x^\star\|^2 + \sigma \|{x}^{k+1} - x^\star\|^2.
\end{aligned}
\end{equation}
Therefore, using Jenzen's inequality, we can obtain
\begin{equation}
\label{xconverge}
\begin{aligned}
&\left(\frac{1}{\rho} \|y^{k+1}-y^\star\|^2 + (\rho+\xi)\| x^{k} - x^\star\|^2 \right)\\
&- \left(\frac{1}{\rho}\|y^{k+2}-y^\star\|^2 +(\rho+\xi) \|x^{k+1} - x^{\star}\|^2\right)\\
&\geq \rho {r^k}^2 + \sigma \|{x}^{k} - x^\star\|^2 + \sigma \|{x}^{k+1} - x^\star\|^2.
\end{aligned}
\end{equation}
Summing the inequality~\eqref{rk_ineq} from $k=1$ to $k=K$ and letting $K\to\infty$, we have
\begin{equation}
\label{xk_to}
\begin{aligned}
&\frac{1}{\rho} \|y^{1}-y^\star\|^2 + (\rho+\xi)\| x^{0}-x^\star\|^2\\
&\geq \sum_{k=1}^{\infty}\rho {r^k}^2 + \sum_{k=1}^{\infty}\left(\sigma \|{x}^{k} - x^\star\|^2 + \sigma \|{x}^{k+1} - x^\star\|^2\right).
\end{aligned}
\end{equation}
Thus $x^k\to x^\star$.

Lastly, we prove the convergence of the objective value. Because $x^k\to x^\star$ and $r^k\to 0$, all the terms on the right-hand sides of~\eqref{ppstar} and~\eqref{stronglyconvex} go to zeros. Thus, $p^k\to p^\star$.
\end{proof}

In the case when the objective function is quadratic, we can further show the convergence rate is linear. We omit the proof here due to the limitation of space.
\section{Numerical experiment}\label{sec:sim}
In the following, we refer to the algorithm proposed in~\cite{zhao2013optimal} as the ``dual algorithm.''

\subsection{Estimating $r^k$ from Local Frequency Measurements}\label{sec:est}
It was shown in~\cite{zhao2013optimal} that the estimator described in~\cite{kitanidis1987unbiased} may be used to estimate the primal residual $r^k$. This is achieved using a discrete LTI system model with $r^k$ as the input and system frequency $\omega^k$ as the output. Each load measures $\omega^k$ and essentially solves the LTI system to determine the value of $r^{k-1}$ to achieve that output. It was shown that the estimation error sequence forms a sequence with mean zero and bounded variance under certain conditions, which are satisfied in our simulations here. For more detailed information about the estimator and its properties, the reader is referred to~\cite{zhao2013optimal}.


\subsection{Simulation Setup}\label{sec:setup}
We consider the simulation scenario of~\cite{zhao2013optimal}, where there is a single generator in a micro-grid with $n$ loads. Figure~\ref{fig:setup} shows the simulation architecture. A disturbance $\bar{g}$ is applied to the system. Local generator controls (commonly used in generators) adjust generation output to reduce the consumption-generation mismatch---even without the presence of smart loads. A system-wide process disturbance, $\zeta$, and measurement noise, $\delta_i$, at each load are modeled as wide-sense stationary white noise. For more details about the simulation model (such as system and generator dynamics and noise statistics), the reader is referred to~\cite{zhao2013optimal}. For ease of comparison, we choose parameters as done in~\cite{zhao2013optimal}. For the sake of completeness, we summarize the simulation parameters below.

\begin{figure}[htpb]
\includegraphics[scale=0.25]{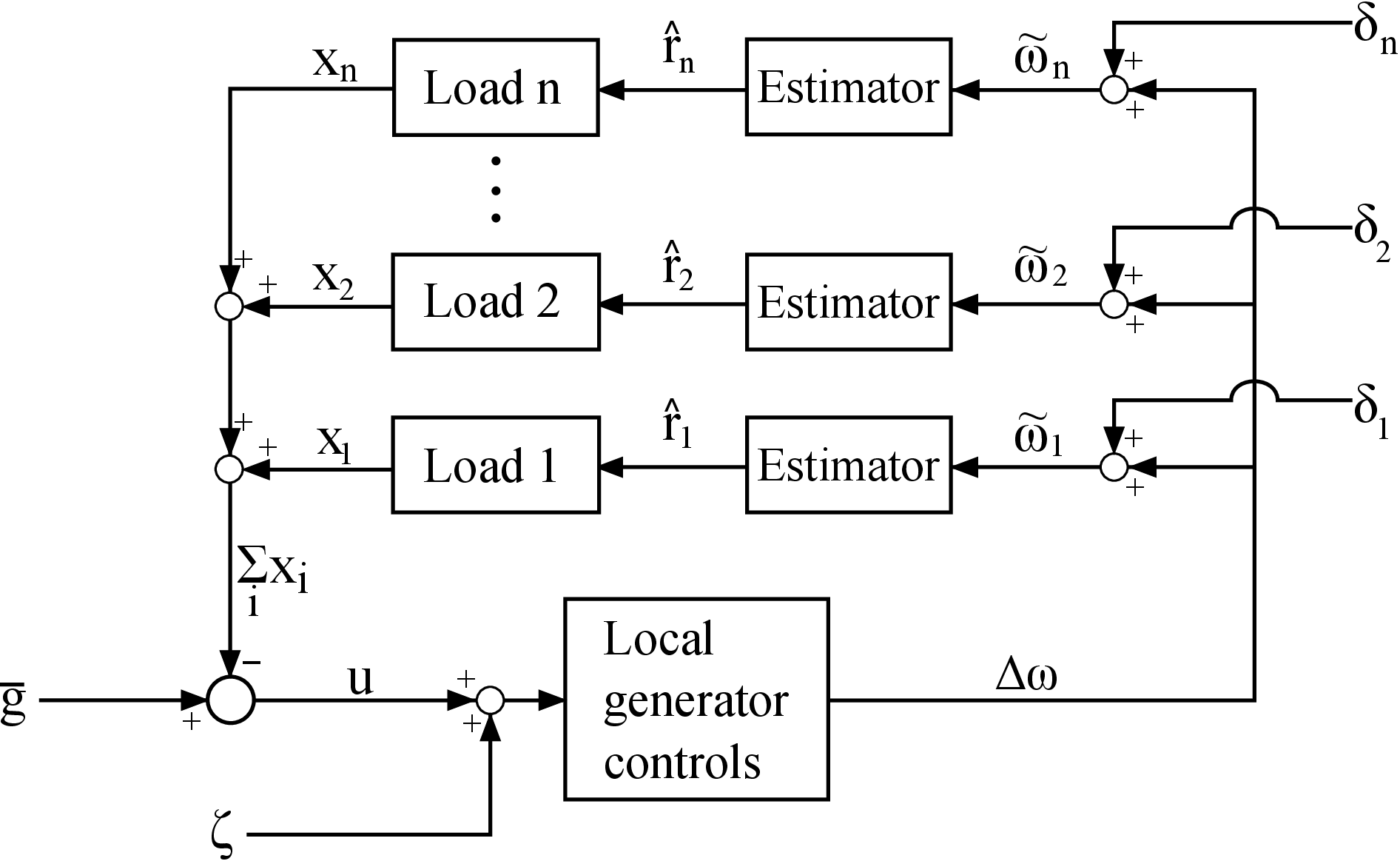}
\caption{Power system model architecture used for simulation.}
\label{fig:setup}
\end{figure}

Each load is constrained by the amount of it may vary consumption. We choose $a_i=0,~i=1,2,\ldots,n$, and we choose each $b_i$ from a uniform distribution; we then normalize the $b_i$'s so that $\sum_{i=1}^{n}b_i=60$~MW.

We use two models of consumer disutility: i) a non-continuously differentiable disutility, and ii) a quardatic disutility. The first disutility is modeled as
\begin{align}\label{eq:f1}
  f_i(x_i) =
  \begin{cases}
    q_i(x_i)^2, & |x_i| \leq \eta_i\\
    3q_i(x_i)^2-q_i\eta_i, & |x_i| \geq \eta_i,
  \end{cases}
\end{align}
For our simulations, we choose $\eta=0.1b_i$. The second disutility is modeled as
\begin{align}\label{eq:f2}
f_i(x_i)=\frac{1}{2}q_ix_i^2.
\end{align}
For each disutility model, $1/q_i$ is chosen from a uniform distribution on the interval [1, 3] for each $i$. 

The initial conditions of the system are $g^0=200$~MW and $x_i^0=0,~i=1,2,\ldots,n$. Two generation drops are modeled as step changes in generation:
\begin{equation*}
g^k = \left\{\begin{array}{l}
200~\mathrm{MW},\;\;~0~\mathrm{s} \leq kT < 20~\mathrm{s}\\
190~\mathrm{MW},~20~\mathrm{s} \leq kT < 50~\mathrm{s}\\
170~\mathrm{MW},~50~\mathrm{s} \leq kT, \end{array} \right.
\end{equation*}
where $T=0.1$ seconds is the discretization interval.

Although the DM-ADMM algorithm does not require communication among loads, communication may be used to average out some of the noise. In that case, the node set of the communication graph $\mathcal{G}$ is simply the set of loads $\mathcal{V}=\{1,2,\ldots,n\}$. Each load $i$ may communicate with other loads that share an edge with it. That is, load $i$ may communicate with all loads $j\in N(i)\triangleq\{j|(i,j)\in\mathcal{E}\}$, where $\mathcal{E}$ is the edge set of the $\mathcal{G}$. In this work, we report results both with and without communication among loads.

For the scenario with communication, we use a 1D-grid communication graph, where
\begin{align*}
N(i)=[\max\{1,i-n_0\},\ \min\{n,i+n_0\}],
\end{align*}
for a positive integer $n_0$. At each iteration $k$, agent $i$ receives the measurement $\hat{r}^k_j$ and value of $y^k_j$ from all its neighbors $j\in N(i)$. From those values, agent $i$ then computes the averaged values $\bar{y}_i^{k}$ and $\bar{r}^k_i$, which are then used in place of $y_i^k$ and $\hat{r}_i^k$ in~\eqref{update_i}, respectively.

Additionally, we choose $y_i^0=0$ for all $i$ and $\rho=2.5\times 10^{-3}$.

\subsection{Simulation Results}

\subsubsection{Non-continuously differentiable disutility}
Figure~\ref{grid1} shows the system frequency and consumer disutility for the proposed DM-ADMM algorithm for the disutility model~\eqref{eq:f2}. The dual algorithm is not implementable in this case. The scenarios both with and without estimation error are shown, and no communication is used in either scenario. The system frequency without smart loads is also shown in red.

It is clear from the figure that the DM-ADMM algorithm significantly reduces the frequency deviation from nominal during the contingency events. Although performance is better without estimation error, the DM-ADMM algorithm still significantly reduces the frequency deviation compared to the generator-only scenario. In the presence of estimation error, the consumer disutility is somewhat higher than when there is no error.

\begin{figure}[htpb]
\includegraphics[scale=0.325]{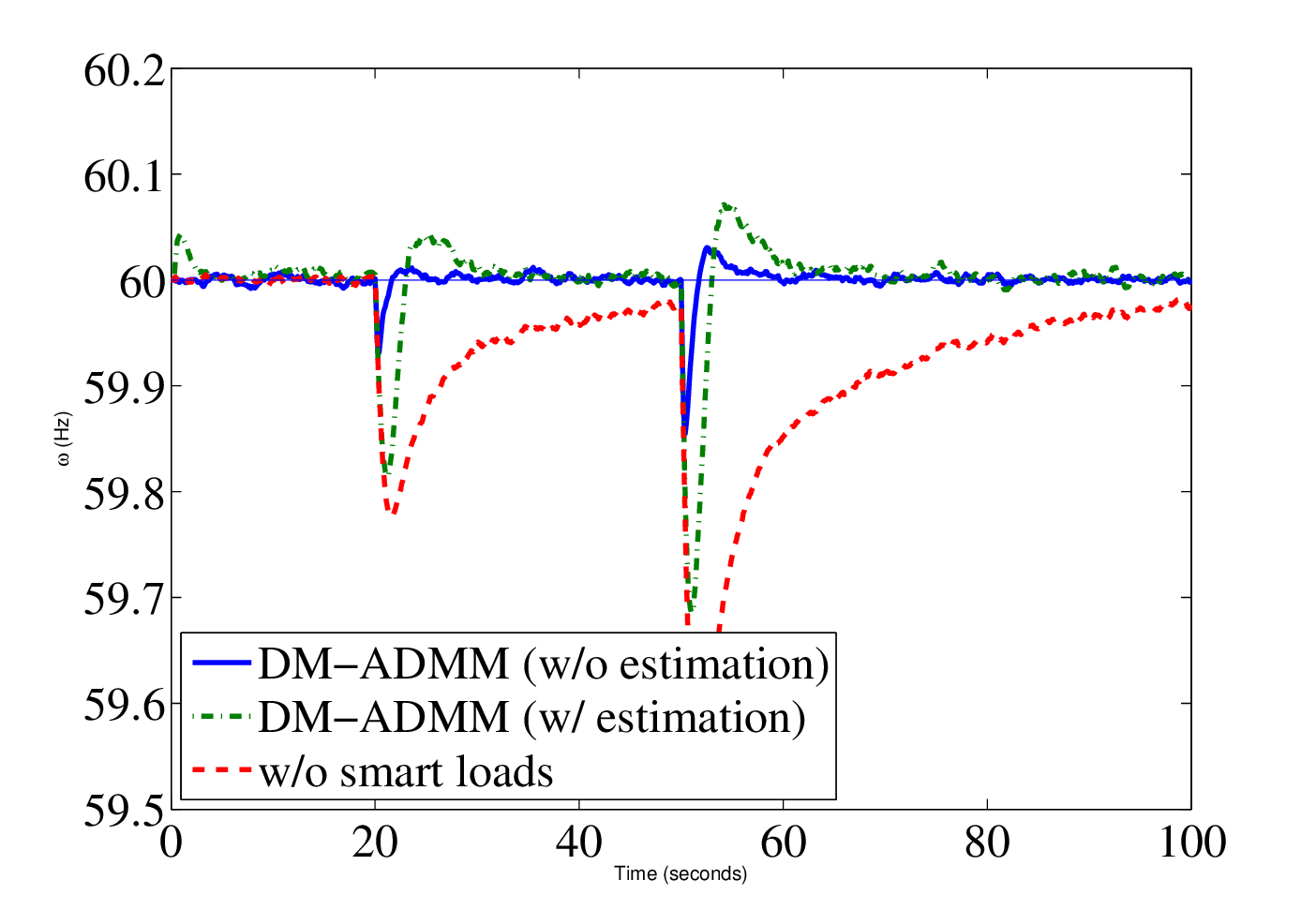}
\includegraphics[scale=0.325]{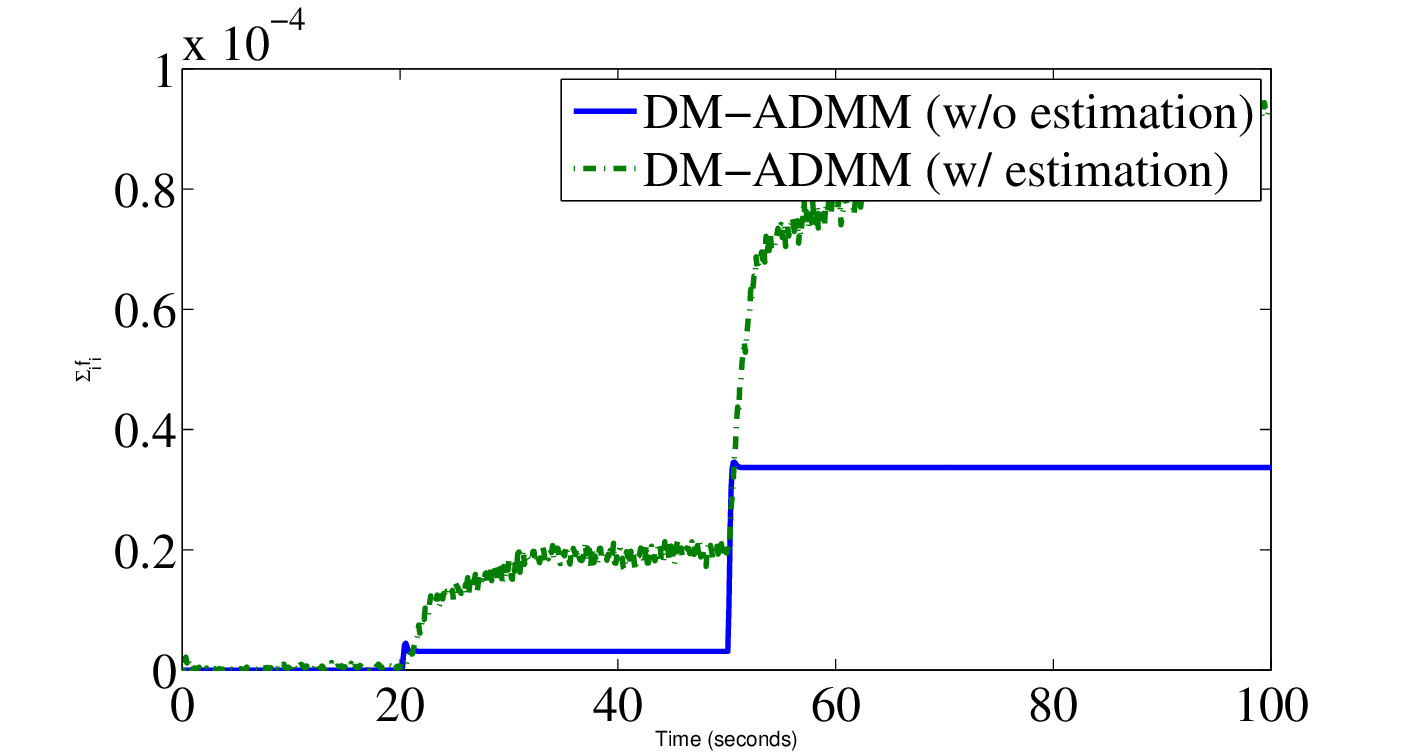}
\caption{System frequency and total consumer disutility for the proposed DM-ADMM algorithm when consumer disutilities are not continuously differentiable.}
\label{grid1}
\end{figure}

\subsubsection{Comparison with dual algorithm}
Figure~\ref{grid2} shows the system frequency and consumer disutility for both the DM-ADMM algorithm and the dual algorithm using disutility model~\eqref{eq:f2}. Both algorithms have estimation error and utilize communication among loads ($n_0$).

The DM-ADMM algorithm outperforms the dual algorithm in maintaining nominal system frequency. Although not reported here, increased communication was found to have little effect on each algorithm's ability to maintain frequency. 
The disutilities for the two algorithms are very similar. 

\begin{figure}[htpb]
\includegraphics[scale=0.325]{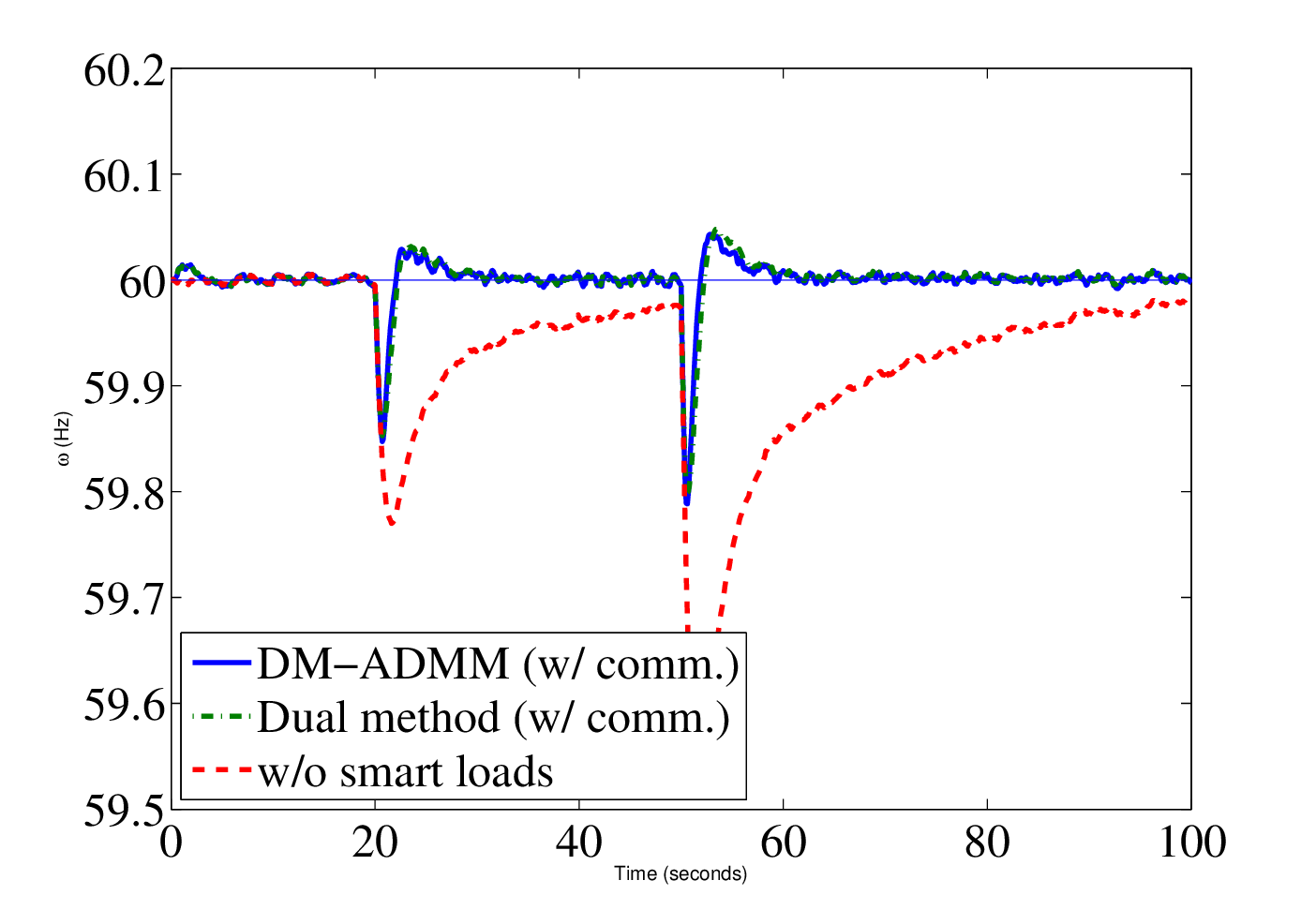}
\includegraphics[scale=0.325]{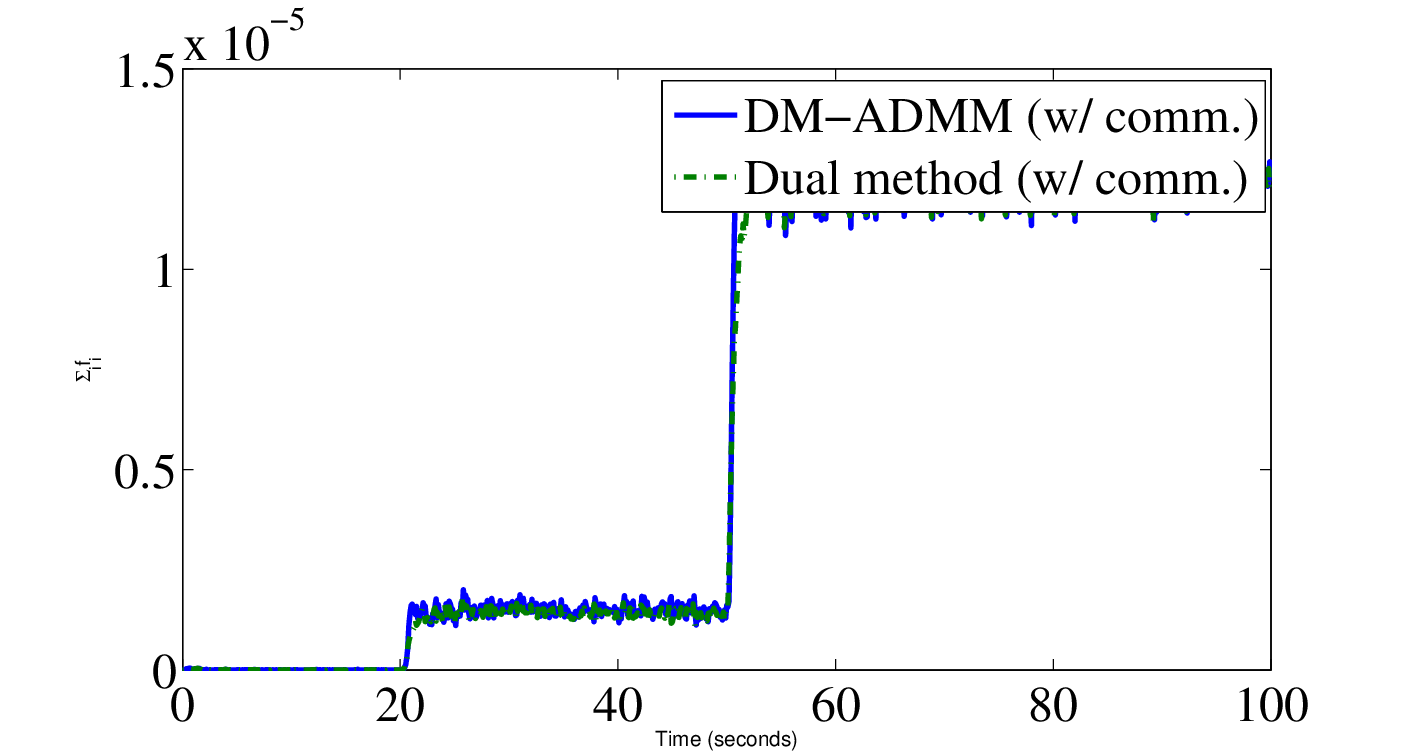}
\caption{System frequency and total consumer disutility for the proposed DM-ADMM algorithm and the dual algorithm with estimation error of consumption-generation mismatch.}
\label{grid2}
\end{figure}

\subsubsection{Comparison with PJ-ADMM}
Figure~\ref{grid3} shows the system frequency and consumer disutility for the DM-ADMM and PJ-ADMM algorithms using disutility model~\eqref{eq:f2}. There is no estimation error for each scenario. 
The PJ-ADMM algorithm has a few additional parameters compared to the DM-ADMM algorithm. We tuned these additional parameters and kept all other parameters the same.

As can be seen, the DM-ADMM algorithm has a smaller overshoot than does the PJ-ADMM algorithm as well as a smaller initial drop in frequency. That is, the PJ-ADMM algorithm does not have any improved performance, \emph{and} it has more parameters that require tuning---making it more difficult to implement in practice.

\begin{figure}[htpb]
\includegraphics[scale=0.325]{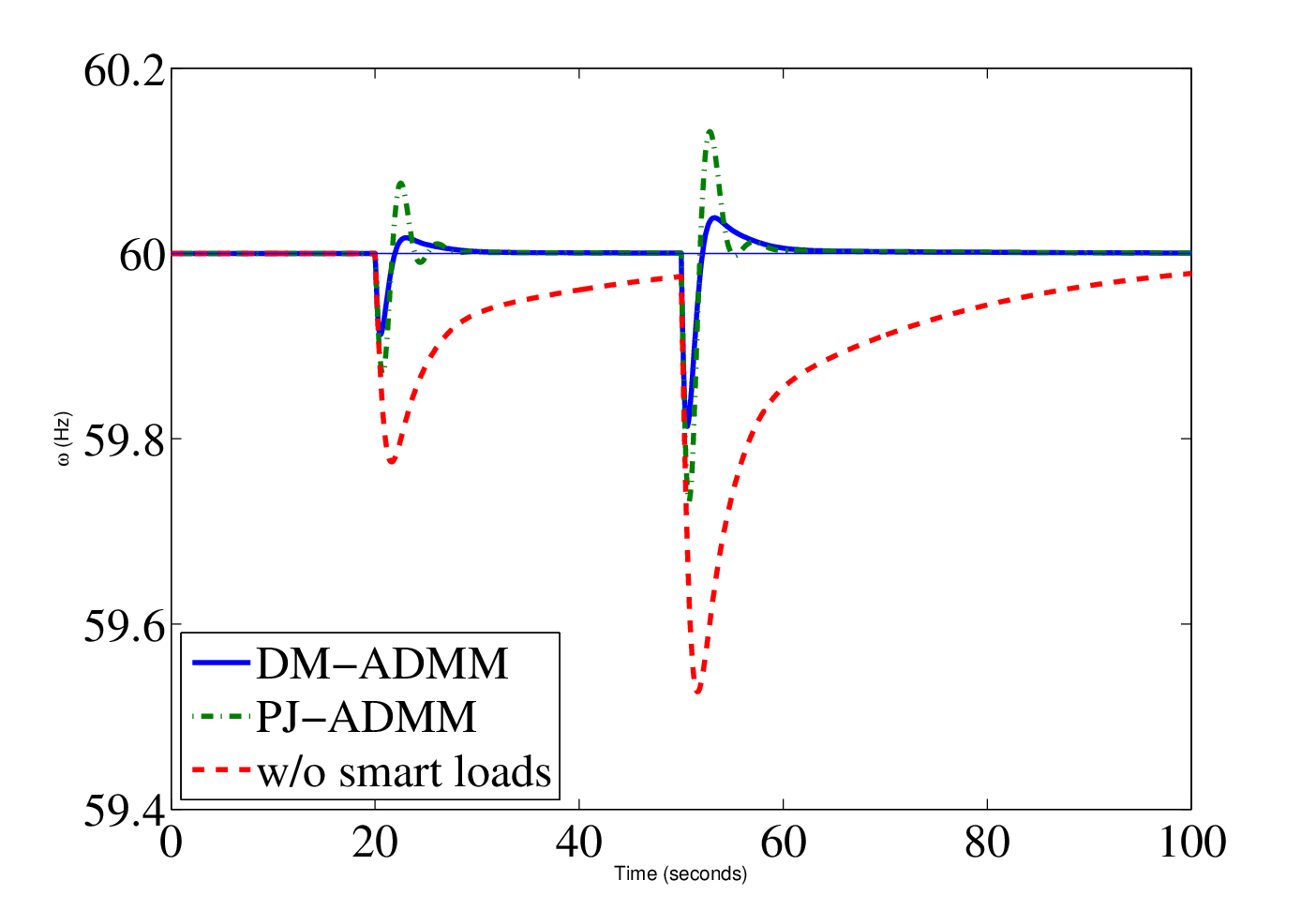}
\includegraphics[scale=0.325]{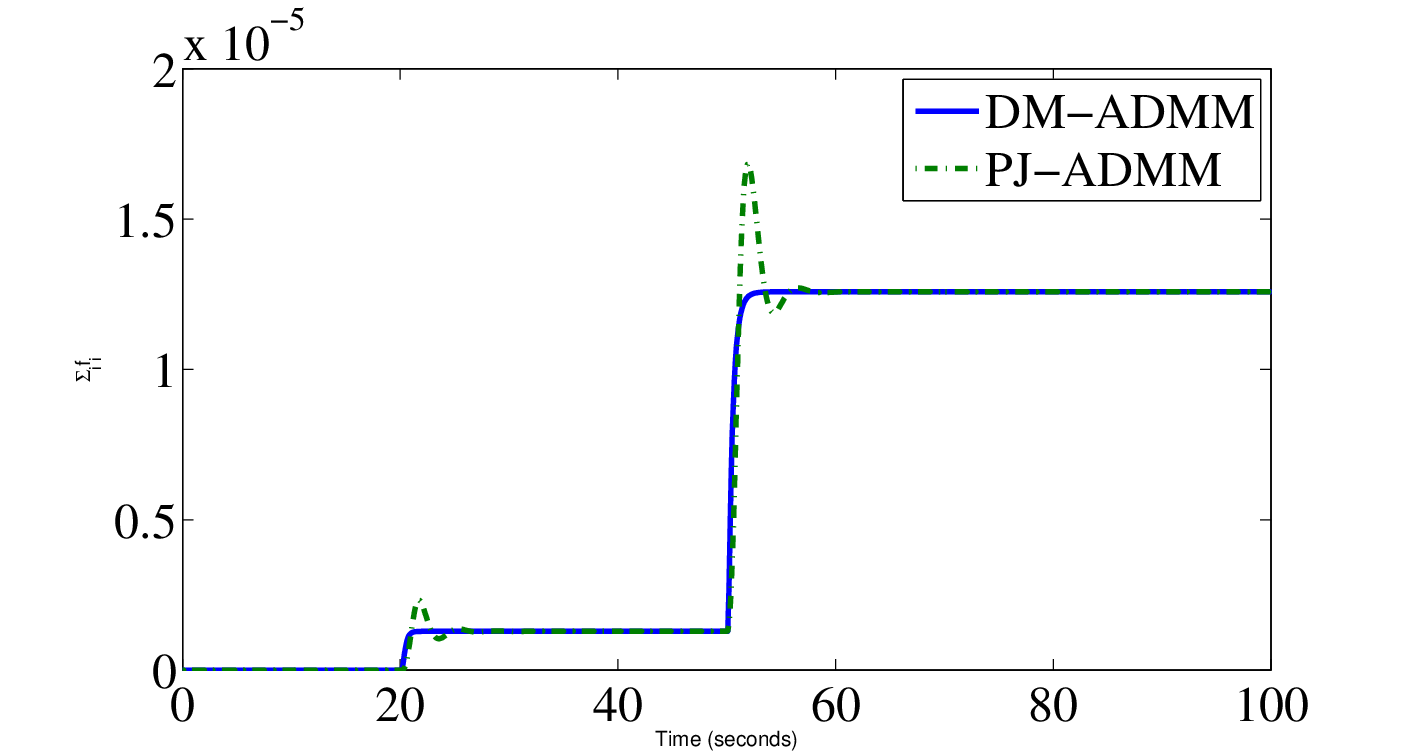}
\caption{System frequency and total consumer disutility for the proposed DM-ADMM algorithm and the PJ-ADMM algorithm without estimation error of consumption-generation mismatch.}
\label{grid3}
\end{figure}

\section{Conclusion}\label{sec:conc}
The proposed DM-ADMM algorithm solves a constrained optimization problem in a decentralized manner with application to using smart and flexible loads to assist generators in maintaining system frequency in the power grid. The objective of the algorithm is to change power consumption of loads to match generation while minimizing disutility associated with changes in consumption. Loads can estimate the mismatch between consumption and generation using local frequency measurements, and no communication with a central authority or other loads is required for each load to implement the DM-ADMM algorithm.

Previously, Zhao \emph{et al.}~\cite{zhao2013optimal} have solved the dual formulation of this problem. The main advantage of the DM-ADMM algorithm over the dual algorithm proposed by Zhao \emph{et al.} is that the DM-ADMM algorithm requires fewer restrictions on the consumer disutility functions. In particular, the dual algorithm requires the disutilities to be twice continuously differentiable with respect to changes in consumption. Conversely, disutilities for the DM-ADMM algorithm need not be continuously differentiable. This allows certain asymmetries in consumer disutilities that may lead to the disutility not being twice continuously differentiable and may be quite common.

A proximal-Jacobian ADMM (PJ-ADMM) algorithm was proposed in~\cite{deng2013parallel}, and it was supposed in~\cite{liu2015multi} that it might be used for application to smart grid. Here, we showed numerical results for the PJ-ADMM algorithm in a smart-grid application (no numerical results were reported in~\cite{liu2015multi}).

In this preliminary work, we proved loads using the DM-ADMM algorithm will converge to a feasible optimal point that minimizes consumer disutility while matching power consumption and power generation when the mismatch between the two is known exactly by each load. In practice, there will be some noise in the estimation of the mismatch from local frequency measurements. Simulations show that the DM-ADMM algorithm performs comparably to or better than the dual algorithm when applicable and that the DM-ADMM algorithm performs comparably to or better than the PJ-ADMM algorithm. The DM-ADMM algorithm is also more easily implemented than the PJ-ADMM algorithm because it has fewer parameters that must be tuned.

Future work will focus on analysis of the DM-ADMM algorithm for the stochastic case. In addition, future work will focus on relaxing the restriction of strong convexity on the consumer disutility. Another interesting avenue for future work is the extension of the DM-ADMM algorithm for time-varying generation (i.e., a changing equality constraint).

\section*{Acknowledgment}
The authors thank Changhong Zhao and Steven Low for their assistance in implementing the power grid simulation model used in this work.

\bibliographystyle{IEEEtran}
\bibliography{jj/library,jb/Jonathan}

\section*{Appendix}
\begin{proof}[Proof of Proposition~\ref{obj_cvgs}]
$\xplus$ is the minimizer in the update~\eqref{update_i}, we have
\[
0\in \partial f_i(\xplus) + {y_i^{k+1}} + \rho (\xplus+\sum_{j\neq i}x_j^{k}-C).
\]
Using the update rule $y^{k+2}  =  y^{k+1} + \rho (x_i^{k+1}+\sum_{j\neq i}x_j^{k+1}-C)$, we obtain
\begin{equation}
\label{zero_grad}
0\in \partial f_i(\xplus) + {y^{k+2}+\rho \big(\sum_{j\neq i}x_j^{k} - \sum_{j\neq i}x_j^{k+1}\big)}.
\end{equation}
From this inequality, we observe that $\xplus$ is the minimizer of
\begin{align*}
f_i(x) + \left( y^{k+2}+\rho (\sum_{j\neq i}x_j^{k} - \sum_{j\neq i}x_j^{k+1}) \right)x.
\end{align*}
Thus,
\begin{align}
\begin{aligned}
f_i(\xplus)&+\left( y^{k+2}+\rho (\sum_{j\neq i}x_j^{k} - \sum_{j\neq i}x_j^{k+1}) \right)^{k+1}\xplus\\
& \leq f_i(x_i^\star)+\left( y^{k+2}+\rho (\sum_{j\neq i}x_j^{k} - \sum_{j\neq i}x_j^{k+1}) \right)x_i^\star,
\end{aligned}
\end{align}
which implies
\begin{align}
\begin{aligned}
& f_i(\xplus) - f_i(x_i^\star)\\
&\leq \left( y^{k+2}+\rho \big(\sum_{j\neq i}x_j^{k} - \sum_{j\neq i}x_j^{k+1}\big) \right)(x_i^\star-\xplus).
\end{aligned}
\end{align}
Summing over all $i$'s, we obtain the result
\begin{align}
\begin{aligned}
p^{k+1} - p^\star \leq &-y^{k+2}r^{k+1} - \rho(r^k-r^{k+1})r^{k+1}\\
&- \rho\sum_{i=1}^n (x^k_i - x^{k+1}_i)(x_i^\star-\xplus)
\end{aligned}
\end{align}
\end{proof}

\begin{proof}[Proof of Lemma~\ref{lb_obj}]
Because $(x^\star, y^\star)$ is the global solution of the problem, it is also the minimizer of the unaugmented Lagrangian $L_0(x,y) = \sum_i f_i(x)+y(\sum_i x_i - C)$. $L_0(x,y^\star) = \sum_i f_i(x)+y^\star (\sum_i x_i - C)$, which is also strongly convex.
We have the inequality
\[
L_0(x^\star, y^\star) \leq L_0({x}^{k+1}, y^\star) - \xi \|{x}^{k+1} - x^\star\|^2
\]
Substituting in the expression of $L_0$, we obtain the result.
\end{proof}

\begin{proof}[Proof of Proposition~\ref{primal_res}]
From Lemma~\ref{lb_obj}, we have
\[
p^\star - p^{k+1}  \leq y^\star r^{k+1}-\xi\|x^{k+1}-x^*\|^2.
\]
Add this inequality to~\eqref{ppstar},
\begin{align*}
0  \leq &-(y^{k+2} - y^\star) r^{k+1} - \rho(r^k-r^{k+1})r^{k+1}\\
&- \rho\sum_{i=1}^n (x^k_i - x^{k+1}_i)(x_i^\star-\xplus) - \xi \|{x}^{k+1} - x^\star\|^2.
\end{align*}
Multiplying through by $-2$ yields
\begin{align}
\label{byneg2}
\begin{aligned}
0  \geq & 2(y^{k+2} - y^\star) r^{k+1} + 2\rho(r^k-r^{k+1})r^{k+1}\\
&+ 2\rho\sum_{i=1}^n (x^k_i - x^{k+1}_i)(x_i^\star-\xplus)+ 2\xi \|{x}^{k+1} - x^\star\|^2.
\end{aligned}
\end{align}
We examine the first term on the right-hand-side. Using the update rule $y^{k+2}  =  y^{k+1} + \rho r^{k+1}$, we have
\begin{equation}
\begin{aligned}
2(y^{k+2} - y^\star) r^{k+1} &= 2(y^{k+1} - y^\star) r^{k+1} + 2\rho \|r^{k+1}\|^2\\
&=\frac{2}{\rho}(y^{k+1} - y^\star) (y^{k+2} - y^{k+1})\\
&\quad +\frac{1}{\rho}\|y^{k+2} - y^{k+1}\|^2+ \rho \|r^{k+1}\|^2\\
&=\frac{1}{\rho}\left( \|y^{k+2}-y^\star\|^2-\|y^{k+1}-y^\star\|^2\right)\\
&\quad + \rho \|r^{k+1}\|^2.
\end{aligned}
\end{equation}
The last line above is obtained by using $y^{k+2} - y^{k+1} = y^{k+2} -y^\star + y^\star- y^{k+1}$. Then inequality~\eqref{byneg2} can be written as
\begin{align}
\label{primal_res_1}
\begin{aligned}
\frac{1}{\rho}&\left(\|y^{k+1}-y^\star\|^2-\|y^{k+2}-y^\star\|^2\right)\\
&\geq \rho {r^{k+1}}^2 + 2\rho(r^k-r^{k+1})r^{k+1}\\
&\quad + 2\rho\sum_{i=1}^n (x^k_i - x^{k+1}_i)(x_i^\star-\xplus) + 2\xi \|{x}^{k+1} - x^\star\|^2.
\end{aligned}
\end{align}
We first examine the last term above.
Using $x^k_i - x^{k+1}_i = (x^k_i -x_i^\star) - (x^{k+1}_i -x_i^\star)$ and vector notation, the last term becomes
\begin{equation}
\label{last2terms}
2\rho (x^k - x^{\star})\tr(x^\star-x^{k+1}) - 2\rho (x^{k+1} - x^{\star})\tr(x^\star-x^{k+1}).
\end{equation}
We add and subtract to this term the quantity
\begin{align*}
\rho\| x^{k+1} - x^k\|^2 = &\rho\| x^{k+1} - x^\star\|^2 + \rho\| x^{k} - x^\star\|^2\\
&- 2\rho(x^{k+1} - x^\star)\tr (x^{k} - x^\star).
\end{align*}
After rearranging, we rewrite~\eqref{last2terms} as
\begin{equation}
\rho\| x^{k+1} - x^k\|^2 - \rho\| x^{k} - x^\star\|^2
+\rho \|x^{k+1} - x^{\star}\|^2.
\end{equation}
Now inequality~\eqref{primal_res_1} becomes
\begin{align}
\begin{aligned}
&\left(\frac{1}{\rho} \|y^{k+1}-y^\star\|^2 + \rho\| x^{k} - x^\star\|^2 \right)\\
&- \left(\frac{1}{\rho}\|y^{k+2}-y^\star\|^2 +\rho \|x^{k+1} - x^{\star}\|^2\right)\\
&\geq \rho {r^{k+1}}^2 + 2\rho(r^k-r^{k+1})r^{k+1} + \rho\| x^{k+1} - x^k\|^2\\
&\quad + 2\xi \|{x}^{k+1} - x^\star\|^2.
\end{aligned}
\label{primal_res_almost}
\end{align}
The first two terms on the right-hand-side can be rewritten as
\[
\rho {r^k}^2 - \rho (r^{k+1}-r^k)^2.
\]
Adding $\xi \|{x}^{k} - x^\star\|^2 - \xi \|{x}^{k+1} - x^\star\|^2$ on both sides, we then obtain
\begin{align}
\begin{aligned}
&\left(\frac{1}{\rho} \|y^{k+1}-y^\star\|^2 + (\rho+\xi)\| x^{k} - x^\star\|^2 \right)\\
&- \left(\frac{1}{\rho}\|y^{k+2}-y^\star\|^2 +(\rho+\xi) \|x^{k+1} - x^{\star}\|^2\right)\\
&\geq \rho {r^k}^2 + \rho\| x^{k+1} - x^k\|^2 - \rho (r^{k+1}-r^k)^2\\
&\quad + \xi \|{x}^{k} - x^\star\|^2 + \xi \|{x}^{k+1} - x^\star\|^2.
\end{aligned}
\end{align}
We rewrite the last two terms using the inequality
\begin{equation}
\begin{aligned}
\xi \|{x}^{k} - x^\star\|^2 + \xi \|{x}^{k+1} - x^\star\|^2\geq\frac{\xi}{2} \|{x}^{k+1} - {x}^{k}\|^2.
\end{aligned}
\end{equation}
Using 
\begin{equation*}
\begin{aligned}
(r^{k+1}-r^k)^2 &=& & \left(\sum_{i=1}^n \xplus -\sum_{i=1}^n x^k_i\right)^2\\
&=& & \left(\sum_{i=1}^n \left(\xplus - x^k_i\right)\right)^2
\end{aligned}
\end{equation*}
and
\[
\| x^{k+1} - x^k\|^2 = \sum_{i=1}^n  \left(\xplus - x^k_i\right)^2,
\]
we can re-write the inequality~\eqref{primal_res_almost} as
\begin{align}
\label{jenzen}
\begin{aligned}
&\left(\frac{1}{\rho} \|y^{k+1}-y^\star\|^2 + (\rho+\xi)\| x^{k} - x^\star\|^2 \right)\\
&- \left(\frac{1}{\rho}\|y^{k+2}-y^\star\|^2 +(\rho+\xi) \|x^{k+1} - x^{\star}\|^2\right)\\
&\geq \rho {r^k}^2 + (\rho+\frac{\xi}{2})n\sum_{i=1}^n \frac{1}{n} \left(\xplus - x^k_i\right)^2\\
&\quad - \rho n^2\left(\sum_{i=1}^n \frac{1}{n}\left(\xplus - x^k_i\right)\right)^2.
\end{aligned}
\end{align}
Finally, using the condition $\displaystyle\rho\leq \frac{\xi}{2\left(n-1\right)}$ and applying Jenzen's inequality to the last two terms, we have
\begin{equation*}
\begin{aligned}
(\rho+\frac{\xi}{2})n\sum_{i=1}^n \frac{1}{n} \left(\xplus - x^k_i\right)^2
- \rho n^2\left(\sum_{i=1}^n \frac{1}{n}\left(\xplus - x^k_i\right)\right)^2\\
\geq 0.
\end{aligned}
\end{equation*}
The result follows.
\end{proof}
\end{document}